\newtheorem{theorem}{Theorem}
\theoremstyle{plain}
\newtheorem{definition}{Definition}
\newtheorem{remark}{Remark}
\numberwithin{equation}{section}
\begin{document}
\title[Moduli Spaces of $J$-holomorphic Curves with General Jet Constraints]{%
Moduli Spaces of $J$-holomorphic Curves with General Jet Constraints}
\author{Ke Zhu}
\address{Department of Mathematics\\
The Chinese University of Hong Kong\\
Shatin, Hong Kong}
\email{kzhu@math.cuhk.edu.hk}
\thanks{}
\date{November 9, 2009}
\subjclass{}
\keywords{$J$-holomorphic curve, jet evaluation map, transversality}

\begin{abstract}
In this paper, we prove that the tagent map of the holomorphic $k$-jet
evaluation $j_{hol}^{k}$ from the mapping space to holomorphic $k$-jet
bundle, when restricted on the universal moduli space $\mathcal{M}_{1}^{\ast
}\left( \Sigma ,M,\beta \right) $ of simple $J$-holomorphic curves with one
marked point, is surjective. From this we derive that for generic $J$, the
moduli space of simple $J$-holomorphic curves with general jet constraints
at marked points is a smooth manifold of expected dimension.
\end{abstract}

\maketitle

\section{ Introduction}

\bigskip

\bigskip Let $\left( M,\omega \right) $ be a symplectic manifold of
dimension $2n$. Denote by $\mathcal{J}_{\omega }$ the set of almost complex
structures $J$ on $M$ compatible with $\omega $. Let $\Sigma $ be a compact
oriented surface without boundary, and $\left( j,u\right) $ a pair of
complex structure $j$ on $\Sigma $ and a map $u:\Sigma \rightarrow M$. We
say $\left( j,u\right) $ is a $J$-holomorphic curve if $\overline{\partial }%
_{J,j}u:=\frac{1}{2}\left( du+J\circ du\circ j\right) =0$. We let $\mathcal{M%
}_{1}\left( \Sigma ,M,J;\beta \right) $ be the standard moduli space of $J$%
-holomorphic curves in class $\beta \in H_{2}\left( M,\mathbb{Z}\right) $
with one marked point, and $\mathcal{M}_{1}^{\ast }\left( \Sigma ,M,J;\beta
\right) $ be the set of simple (i.e. somewhere injective) $J$-holomorphic
curves in $\mathcal{M}_{1}\left( \Sigma ,M;\beta \right) $.

Since the birth of the theory of $J$-holomorphic curves, moduli spaces of $J$%
-holomorphic curves with constraints at marked points have lead to finer
symplectic invarints like Gromov-Witten invariants and quantumn cohomology.  
$J$-holomorphic curves with embedding property also plays important role in
low dimesional symplectic geometry, like the works of \cite{HT} and \cite%
{Wen}. These constraints all can be viewed as partial differential relations
in the $0$-jet and $1$-jet bundles. In relative Gromov-Witten theory,
contact order of $J$-holomorphic curves with given symplectic hypersurfaces
(divisors) was used to define the relevant moduli spaces.  In the work of
Cieliebak-Mohke \cite{CM} and Oh \cite{Oh}, the authors studied the moduli
space of $J$-holomorphic curves with prescribed vanishing orders of
derivatives at marked points. All these are vanishing conditions in $k$-jets
bundles. It is then natural to ask what properties we can expect for moduli
spaces of $J$-holomorphic curves with general constraints in jet bundles
(while all constraints in previous examples are\emph{\ zero sections} in
various jet bundles).

The main purpose of this paper is to confirm that for a wide class of closed
partial differential relations in \emph{holomorphic jet bundles} (Definition %
\ref{hol-jet-bdl}, orginially defined in \cite{Oh}), the moduli spaces of $J$%
-holomorphic curves from $\Sigma $ to $M$ with given constraints at marked
points behave well for generic $J$ (Theorem \ref{S}). Namely, they are
smooth manifolds of dimension predicted by index theorem, and all elements
in the moduli spaces are Fredholm regular. During the proof it appears that
holomorphic jet bundles are the natural framework to put jet constraints for 
$J$-holomorphic curves in order to obtain regularity of their moduli spaces.
The regularity of $J$-holomorphic curve moduli spaces \emph{fails} for
general constraints in \emph{usual} jet bundle (Remark \ref{fail-usual-jet}%
), but still holds in a special case when the moduli space consists of
immersed $J$-holomorphic curves (Theorem \ref{usual-jet}).

The key of the proof is to establish the sujective property of the
linearization of $k$-jet evaluations on the universal moduli spaces of $J$%
-holomorphic curves at marked points insider the mapping space, including
the parameter $J\in \mathcal{J}_{\omega }$(Theorem \ref{Submersion}). It is
important to take the evaluations in \emph{holomorphic} \emph{jet bundles}
in order to get the surjectivity of the linearization of the $k$-jet
evaluation map.

Since $\mathcal{J}_{\omega }$ is a huge parameter space to deform $J$%
-holomorphic curves, the sujective property here is a reminiscence of the
classic Thom transversality theorem, which says that the $k$-jet evaluation
on smooth mapping space to the $k$-jet bundle is transversal to any section
there.

The framework of the paper is similar to \cite{Oh}, which in turn is a
higher jet generalization of \cite{OZ} for $1$-jet transversality of $J$%
-holomorphic curves. The main steps of the paper are in order:

\begin{enumerate}
\item We set up the Banach bundle including the finite dimensional
holomorphic $k$-jet subbundle $J_{hol}^{k}\left( \Sigma ,M\right) $ over the
mapping space $\mathcal{F}_{1}\left( \Sigma ,M\right) \times \mathcal{J}%
_{\omega }$ and define the section $\Upsilon _{k}=\left( \overline{\partial }%
,j_{hol}^{k}\right) $, where%
\begin{equation*}
\Upsilon _{k}:\left( \left( u,j,z_{0}\right) ,J\right) \rightarrow \left( 
\overline{\partial }_{j,J}u,j_{hol}^{k}\left( u\left( z_{0}\right) \right)
\right) .
\end{equation*}%
We inteprete the universal $J$-holomorphic curve moduli space as 
\begin{equation*}
\mathcal{M}\left( \Sigma ,M\right) =\overline{\partial }^{-1}\left( 0\right)
=\Upsilon _{k}^{-1}\left( 0,J_{hol}^{k}\left( \Sigma ,M\right) \right) .
\end{equation*}

\item We compute the linearization $D\Upsilon _{k}$ of the section $\Upsilon
_{k}$. We express the submersion property of $\Upsilon _{k}$ as the
solvability of a system of equations $D\Upsilon _{k}\left( \xi ,B\right)
=\left( \gamma ,\alpha \right) $ for any $\left( \gamma ,\alpha \right) $,
where $\left( \xi ,B\right) \in T_{u}\mathcal{F}_{1}\left( \Sigma ,M\right)
\times T_{J}\mathcal{J}_{\omega }$, or equivalently, the vanishing of the
cokernal element $\left( \eta ,\zeta \right) $ in the Fredholm alternative
system: $F\left\langle \left( \xi ,B\right) ,\left( \eta ,\zeta \right)
\right\rangle =0$ for all $\left( \xi ,B\right) $. This is called the
cokernal equation.

\item Using the abundance of $B\in T_{J}\mathcal{J}_{\omega }$ we get supp$%
\eta \subset \left\{ z_{0}\right\} $. Then we use a structure theorem in
distribution to write $\eta $ as a linear combination of $\delta $ function
and its derivatives at $z_{0}$, up to $\left( k-1\right) $-th order
derivatives.

\item Since supp$\eta \subset \left\{ z_{0}\right\} $ the cokernal equation
is supported at $z_{0}.$ We replace the $\xi $ in the cokernal equation by $%
\xi +h$ where $h=h\left( z,\overline{z}\right) $ is a suitable polynomial in
local coordinates nearby $z_{0}$, and set $B=0$, so that the cokernal
equation is reduced to $\left\langle D_{u}\overline{\partial }_{j,J}\xi
,\eta \right\rangle =0$ for all $\xi $. The crucial observation is that to
get $\left\langle D_{u}\overline{\partial }_{j,J}\xi ,\eta \right\rangle =0$
we do not need so strong conditions of vanishing of $1\thicksim k$%
-derivatives of $u$ at $z_{0}$ as in \cite{Oh} and \cite{CM}. This is by
exploring the flexibility of $h$ to get rid of redundant terms from the
original cokernal equation.

\item Then we apply elliptic regularity to conclude $\eta =0$ and
consequently $\zeta =0$. Therefore we get the sujectivity  of $D\Upsilon _{k}
$ and $Dj_{hol}^{k}$.

\item Finally, there is an obstruction in step 4 to get $h$ when $\zeta
_{k}=0$, where $\zeta _{k}$ is the $k$-th component of $\zeta $. But when $%
\zeta _{k}=0$ the cokernal equation is reduced to the $\left( k-1\right) $%
-jet evaluation setting, so we still get $\left( \eta ,\zeta \right) =\left(
0,0\right) $ by induction on $k$.
\end{enumerate}

\bigskip \textbf{Acaknowledgement. }The author would like to thank Yakov
Eliashberg to suggest the generalization from  \cite{Oh} to general PDE
relations. He would also like to thank Yong-Geun Oh on past discussions in
holomorphic jet transversality.

\section{\protect\bigskip Holomorphic jet bundle}

We recall the holomorphic jet bundle from \cite{Oh}. Given $\Sigma ,M$, and $%
\left( z,x\right) \in \Sigma \times M$, the $k$-jet with source $z$ and
target $x$ is defined as (see \cite{Hir})%
\begin{equation*}
J_{z,x}^{k}\left( \Sigma ,M\right) =\dprod\limits_{l=0}^{k}Sym^{l}\left(
T_{z}\Sigma ,T_{x}M\right) ,
\end{equation*}%
where $Sym^{l}\left( T_{z}\Sigma ,T_{x}M\right) $ is the set of $l$%
-multilinear maps from $T_{z}\Sigma $ to $T_{x}M$ for $l\geq 1$. Here for
convenience we have set $Sym^{0}\left( T_{z}\Sigma ,T_{x}M\right) =M$. Let 
\begin{equation*}
J^{k}\left( \Sigma ,M\right) =\dbigcup\limits_{\left( z,x\right) \in \Sigma
\times M}J_{z,x}^{k}\left( \Sigma ,M\right) 
\end{equation*}%
be the $k$-jet bundle over $\Sigma \times M$. For the mapping space 
\begin{equation*}
\mathcal{F}_{1}\left( \Sigma ,M;\beta \right) =\left\{ \left( \left( \Sigma
,j\right) ,u\right) |j\in \mathcal{M}\left( \Sigma \right) ,z\in \Sigma
,u:\Sigma \rightarrow M,\left[ u\right] =\beta \right\} ,
\end{equation*}%
we consider the map%
\begin{equation*}
\mathcal{F}_{1}\left( \Sigma ,M;\beta \right) \rightarrow \Sigma \times M,%
\text{ }\left( u,j,z\right) \rightarrow \left( z,u\left( z\right) \right) .
\end{equation*}%
By this map we can pull back the bundle $J^{k}\left( \Sigma ,M\right)
\rightarrow \Sigma \times M$ to the base $\mathcal{F}_{1}\left( \Sigma
,M;\beta \right) $. By abusing notation, we still call the resulted bundle
by $J^{k}\left( \Sigma ,M\right) .$Then $J^{k}\left( \Sigma ,M\right)
\rightarrow \mathcal{F}_{1}\left( \Sigma ,M;\beta \right) $ is a finite
dimensional vector bundle over the Banach manifold $\mathcal{F}_{1}\left(
\Sigma ,M;\beta \right) $. We define the $k$-jet evaluation 
\begin{equation*}
j^{k}:\mathcal{F}_{1}\left( \Sigma ,M;\beta \right) \rightarrow J^{k}\left(
\Sigma ,M\right) ,\text{ }j^{k}\left( \left( u,j\right) ,z\right)
=j_{z}^{k}u\in J_{z,u\left( z\right) }^{k}\left( \Sigma ,M\right) .
\end{equation*}%
Then $j^{k}$ is a smooth section. Classic Thom transversality theorem says
that $j^{k}$ is transversal to any section in $J^{k}\left( \Sigma ,M\right) $%
.

Now we turn to the case when $\Sigma $ and $M$ are equipped with (almost)
complex structures $j$ and $J$ respectively. The corresponding concept is
the holomorphic jet bundle defined in \cite{Oh}. With respect to $\left(
j_{z},J_{x}\right) $, $Sym_{z,x}^{l}\left( \Sigma ,M\right) $ splits into
summands indexed by the bigrading $\left( p,q\right) $ for $p+q=k$: 
\begin{equation*}
Sym_{z,x}^{l}\left( \Sigma ,M\right) =Sym^{\left( l,0\right) }\left(
T_{z}\Sigma ,T_{x}M\right) \oplus Sym^{\left( 0,l\right) }\left( T_{z}\Sigma
,T_{x}M\right) \oplus \text{\textquotedblleft mixed parts\textquotedblright }
\end{equation*}%
Let 
\begin{eqnarray*}
H_{j_{z,}J_{x}}^{\left( l,0\right) }\left( \Sigma ,M\right) &=&Sym^{\left(
l,0\right) }\left( T_{z}\Sigma ,T_{x}M\right) , \\
H_{j,J}^{\left( l,0\right) }\left( \Sigma ,M\right)
&=&\dbigcup\limits_{\left( z,x\right) \in \Sigma \times
M}H_{j_{z,}J_{x}}^{\left( l,0\right) }\left( \Sigma ,M\right) .
\end{eqnarray*}%
Given $\left( j,J\right) $, the $\left( j,J\right) $\emph{-holomorphic jet
bundle} $J_{\left( j,J\right) hol}^{k}\left( \Sigma ,M\right) $ is defined
as 
\begin{equation}
J_{\left( j,J\right) hol}^{k}\left( \Sigma ,M\right)
=\dprod\limits_{l=0}^{k}H_{j,J}^{\left( l,0\right) }\left( \Sigma ,M\right) ,
\label{HolSum}
\end{equation}%
which is a finite dimensional vector bundle over $\Sigma \times M$.

We define the bundle%
\begin{equation*}
J_{hol}^{k}\left( \Sigma ,M\right) =\dbigcup\limits_{\left( j,J\right) \in 
\mathcal{M}\left( \Sigma \right) \times \mathcal{J}_{\omega }}J_{\left(
j,J\right) hol}^{k}\left( \Sigma ,M\right) .
\end{equation*}%
$J_{hol}^{k}\left( \Sigma ,M\right) \rightarrow \Sigma \times M\times 
\mathcal{M}\left( \Sigma \right) \times \mathcal{J}_{\omega }$ is a finite
dimensional vector bundle over the base Banach manifold. Using the pull back
of the map%
\begin{equation*}
ev:\mathcal{F}_{1}\left( \Sigma ,M;\beta \right) \times \mathcal{J}_{\omega
}\rightarrow \Sigma \times M\times \mathcal{M}\left( \Sigma \right) \times 
\mathcal{J}_{\omega },\text{ }\left( \left( u,j\right) ,z,J\right)
\rightarrow \left( z,u\left( z\right) ,j,J\right) ,
\end{equation*}%
$ev^{\ast }\left( J_{hol}^{k}\left( \Sigma ,M\right) \right) $ is a finite
dimensional vector bundle over the Banach manifold $\mathcal{F}_{1}\left(
\Sigma ,M;\beta \right) \times \mathcal{J}_{\omega }$. By abusing of
notation, we still call $ev^{\ast }\left( J_{hol}^{k}\left( \Sigma ,M\right)
\right) $ by $J_{hol}^{k}\left( \Sigma ,M\right) $.

\begin{definition}
\label{hol-jet-bdl} $J_{hol}^{k}\left( \Sigma ,M\right) \rightarrow \mathcal{%
F}_{1}\left( \Sigma ,M;\beta \right) \times \mathcal{J}_{\omega }$ is called
the holomorphic $k$-jet bundle.
\end{definition}

Let $\pi _{hol}:J^{k}\left( \Sigma ,M\right) \rightarrow J_{hol}^{k}\left(
\Sigma ,M\right) $ be the bundle projection. We define the holomorphic $k$%
-jet evaluation%
\begin{equation*}
j_{hol}^{k}=\pi ^{hol}\circ j^{k}.
\end{equation*}%
It is not hard to see $j_{hol}^{k}$ is a smooth section of the Banach bundle 
$\mathcal{F}_{1}\left( \Sigma ,M;\beta \right) \times \mathcal{J}_{\omega
}\rightarrow J_{hol}^{k}\left( \Sigma ,M\right) $. According to the summand $%
\left( \ref{HolSum}\right) $, we write $j_{hol}^{k}$ in components 
\begin{equation*}
j_{hol}^{k}=\dprod\limits_{l=0}^{k}\sigma ^{l},
\end{equation*}%
where the $l$-th component is%
\begin{equation*}
\sigma ^{l}:\mathcal{F}_{1}\left( \Sigma ,M;\beta \right) \times \mathcal{J}%
_{\omega }\rightarrow H_{j,J}^{\left( l,0\right) }\left( \Sigma ,M\right) ,%
\text{ }\left( \left( u,j\right) ,z,J\right) \rightarrow \pi
_{j,J}^{hol}\left( d^{l}u\left( z\right) \right) .
\end{equation*}%
We remark that if $J$ is integrable, $\sigma ^{l}$ corresponds to the $l$-th
holomorphic derivative $\frac{\partial ^{l}}{\partial z^{l}}u$ of $u$ at $z$.

The important point is that the holomorphic $k$-jet bundle and the section $%
j_{hol}^{k}$ are canonically associated to the pair $\left( \Sigma ,j\right) 
$ and $\left( M,J\right) $ in the \textquotedblleft off-shell
level\textquotedblright , i.e. on the space of all smooth maps, not only $J$%
-holomorphic maps. This enables us to formulate the jet constraints for $J$%
-holomorphic maps as some submanifold in the bundle $J_{hol}^{k}\left(
\Sigma ,M\right) \rightarrow \mathcal{F}_{1}\left( \Sigma ,M;\beta \right)
\times \mathcal{J}_{\omega }$.

\section{Fredholm set up}

The Fredholm set up is the same as in \cite{Oh}, with the simplification
that we only need one marked point on $\Sigma $. The case with more marked
points has no essential difference. We introduce the standard bundle 
\begin{equation*}
\mathcal{H}^{^{\prime \prime }}=\dbigcup\limits_{\left( \left( u,j\right)
,J\right) }\mathcal{H}_{\left( \left( u,j\right) ,J\right) }^{^{\prime
\prime }},\text{ \ \ }\mathcal{H}_{\left( \left( u,j\right) ,J\right)
}^{^{\prime \prime }}=\Omega _{j,J}^{\left( 0,1\right) }\left( u^{\ast
}TM\right) 
\end{equation*}%
and define the section 
\begin{equation*}
\Upsilon _{k}:\mathcal{F}_{1}\left( \Sigma ,M;\beta \right) \times \mathcal{J%
}_{\omega }\rightarrow \mathcal{H}^{^{\prime \prime }}\times J^{k}\left(
\Sigma ,M\right) 
\end{equation*}%
as 
\begin{equation*}
\Upsilon _{k}\left( \left( u,j\right) ,z,J\right) =\left( \overline{\partial 
}\left( u,j,J\right) ;j_{hol}^{k}\left( u,j,J,z\right) \right) ,
\end{equation*}%
where 
\begin{equation*}
\overline{\partial }\left( u,j,J\right) =\overline{\partial }_{j,J}\left(
u\right) =\frac{du+J\circ du\circ j}{2}.
\end{equation*}%
Given $\beta \in H_{2}\left( M,\mathbb{Z}\right) $, let 
\begin{equation*}
\mathcal{M}_{1}\left( \Sigma ,M;\beta \right) =\dbigcup\limits_{J\in 
\mathcal{J}_{\omega }}\mathcal{M}_{1}\left( \Sigma ,M,J;\beta \right) 
\end{equation*}%
be the universal moduli space of $J$-holomorphic curves in class $\beta $
with one marked point. Its open subset consisting of somewhere injective $J$%
-holomorphic curves is denoted by $\mathcal{M}_{1}^{\ast }\left( \Sigma
,M;\beta \right) $. It is a standard fact in symplectic geometry that $%
\mathcal{M}_{1}^{\ast }\left( \Sigma ,M;\beta \right) $ is a Banach manifold.

Now we make precise the necessary regularity requirement for the Banach
manifold set-up:

\begin{enumerate}
\item To make sense of the evaluation of $j^{k}u$ at a point $z$ on $\Sigma $%
, we need to take at least $W^{k+1,p}$-completion with $p>2$ of $\mathcal{F}%
_{1}\left( \Sigma ,M;\beta \right) $ so $j^{k}u\in W^{1,p}\hookrightarrow
C^{0}$. To make the section $\Upsilon _{k}$ differentiable we need to take $%
W^{k+2,p}$ completion, since in $\left( \ref{Lin-j}\right) $ $\left(
k+1\right) $-th derivatives of $u$ are involved. To apply Sard-Smale
theorem, we actually need to take $W^{N,p}$ completion with sufficiently
large $N=N\left( \beta ,k\right) $.

\item We provide $\mathcal{H}^{\prime \prime }$ with topology of a $W^{N,p}$
Banach bundle.

\item We also need to provide the Banach manifold structure of $\mathcal{J}%
_{\omega }$. We can borrow Floer's scheme \cite[F]{F} for this whose details
we refer readers thereto.
\end{enumerate}

\section{Transversality}

\begin{theorem}
\label{Submersion}At every $J$-holomorphic curve $\left( \left( u,j\right)
,z,J\right) \in $ $\mathcal{M}_{1}^{\ast }\left( \Sigma ,M;\beta \right)
\subset \mathcal{F}_{1}\left( \Sigma ,M;\beta \right) \times \mathcal{J}%
_{\omega }$, the linearization $D\Upsilon _{k}$ of the map 
\begin{equation*}
\Upsilon _{k}=\left( \overline{\partial },\text{ }j_{hol}^{k}\right) :%
\mathcal{F}_{1}\left( \Sigma ,M;\beta \right) \times \mathcal{J}_{\omega
}\rightarrow \mathcal{H}^{^{\prime \prime }}\times J_{hol}^{k}\left( \Sigma
,M\right)
\end{equation*}%
is surjective. Especially the linearization $Dj_{hol}^{k}$ of the
holomorphic $k$-jet evaluation 
\begin{equation*}
j_{hol}^{k}:\mathcal{F}_{1}\left( \Sigma ,M;\beta \right) \times \mathcal{J}%
_{\omega }\rightarrow J_{hol}^{k}\left( \Sigma ,M\right)
\end{equation*}%
on $\mathcal{M}_{1}^{\ast }\left( \Sigma ,M;\beta \right) $\ is sujective.
\end{theorem}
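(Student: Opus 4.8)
The plan is to reduce surjectivity of $D\Upsilon_k$ to an elliptic estimate together with a structure-of-distributions argument for the cokernel, carrying out the six steps sketched in the introduction. First I would compute $D\Upsilon_k$ at $\left(\left(u,j\right),z_0,J\right)$: the $\overline{\partial}$-component is the usual linearization $\left(\xi,B\right)\mapsto D_u\overline{\partial}_{j,J}\xi+\tfrac12 B\circ du\circ j$ (the term from varying $j$ is irrelevant here), and the $j_{hol}^k$-component is the tuple $\left(D\sigma^0,\dots,D\sigma^k\right)$, where in local holomorphic coordinates $D\sigma^l\left(\xi,B\right)$ equals $\partial_z^l\xi\left(z_0\right)$ plus terms of lower order in $\xi$ and a term in $B$. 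The operator $D_u\overline{\partial}$ is Fredholm and the jet part is of finite rank, so $D\Upsilon_k$ has closed range of finite codimension; hence by the Hahn--Banach/Fredholm-alternative argument its surjectivity is equivalent to the vanishing of every pair $\left(\eta,\zeta\right)$ in the annihilator of the range (with $\eta$ an $L^q$, resp. distributional, $\left(0,1\right)$-form and $\zeta$ in the dual of the jet fiber), i.e. of every solution of the cokernel equation $\langle D_u\overline{\partial}\xi+\tfrac12 B\,du\,j,\eta\rangle+\sum_{l=0}^{k}\langle D\sigma^l\left(\xi,B\right),\zeta_l\rangle=0$ for all $\left(\xi,B\right)$.

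Second, setting $\xi=0$ and letting $B$ range over $T_J\mathcal{J}_\omega$ produces a pairing of $B$ against $\eta$ (plus a term evaluated only at $z_0$ against $\zeta$). By somewhere-injectivity of $u$ and the standard localization argument, as in \cite{Oh}, one chooses $B$ supported near $u\left(z\right)$ for an injective point $z\neq z_0$ with $u\left(z\right)\notin u^{-1}\left(u\left(z_0\right)\right)$, forcing $\eta$ to vanish on the open dense injective locus; away from $z_0$ the cokernel equation reads $D_u\overline{\partial}^{\,*}\eta=0$ weakly, so $\eta$ is smooth there, and being zero on a dense subset it vanishes identically on $\Sigma\setminus\{z_0\}$. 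Thus $\operatorname{supp}\eta\subseteq\{z_0\}$, and the structure theorem for distributions supported at a point gives $\eta=\sum_{|\alpha|\le m}c_\alpha\,\partial^\alpha\delta_{z_0}$; since the cokernel equation shows $D_u\overline{\partial}^{\,*}\eta$ has order $\le k$ at $z_0$ and $D_u\overline{\partial}^{\,*}$ is $\partial_z$ plus lower order (hence injective on such distributions), one also gets $m\le k-1$. Consequently $\zeta$ too is determined by the equation on a neighbourhood of $z_0$.

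Third, and this is the crux, since $\operatorname{supp}\eta\subseteq\{z_0\}$ and each $D\sigma^l$ is local at $z_0$, the cokernel equation with $B=0$ only sees a neighbourhood of $z_0$, so I may take $\xi$ equal near $z_0$ to a polynomial $h\left(z,\bar z\right)=\sum a_{pq}z^p\bar z^q$ (cut off away from $z_0$). The observation to exploit is that $\langle D_u\overline{\partial}h,\eta\rangle$ extracts, to leading order, the coefficients $a_{p,q}$ with $\bar z$-degree $q\ge 1$ — because $\langle\partial^\alpha\delta_{z_0},\partial_{\bar z}h\rangle$ is a nonzero multiple of $a_{\alpha_1,\alpha_2+1}$ — whereas the $\zeta$-pairing $\sum_l\langle D\sigma^l\left(h\right),\zeta_l\rangle$ is governed, to leading order, by the holomorphic jets $\partial_z^l h\left(z_0\right)$, i.e. by the $a_{l,0}$ with $0\le l\le k$, which is a disjoint set of coefficients. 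Hence, prescribing $a_{l,0}=0$ for $l\le k$ and varying the remaining free coefficients $a_{p,q}$ with $q\ge1$, the $\zeta$-contribution is killed (modulo genuinely lower-order corrections, where the flexibility of $h$ must be used with care, in contrast to the strong vanishing hypotheses on the derivatives of $u$ needed in \cite{Oh} and \cite{CM}), and each $c_\alpha$ is forced to $0$; thus $\eta=0$. Feeding $\eta=0$ back into the cokernel equation leaves $\sum_l\langle D\sigma^l\left(\xi,B\right),\zeta_l\rangle=0$ for all $\left(\xi,B\right)$, and since one can perturb $u$ near $z_0$ so as to realize an arbitrary $k$-jet there (classical jet surjectivity), $\zeta=0$. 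This proves surjectivity of $D\Upsilon_k$, and the statement for $Dj_{hol}^k$ follows by composing with the projection off the $\mathcal{H}''$-factor.

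I expect the main obstacle to be precisely the bookkeeping in the third step: the corrections to $D\sigma^l$ and to $D_u\overline{\partial}$ below the principal order couple the "free" and the "constrained" coefficients of $h$, and one must check these can be absorbed. In particular, the polynomial $h$ that isolates the top-order ($\left(k-1\right)$-st order) part of $\eta$ is obstructed exactly when the top component $\zeta_k$ vanishes; but in that case the $\zeta$-pairing involves only $\zeta_0,\dots,\zeta_{k-1}$ and the cokernel equation becomes literally the one for the $\left(k-1\right)$-jet evaluation, so $\left(\eta,\zeta\right)=\left(0,0\right)$ follows by induction on $k$, the base case $k=0$ being the standard fact that the evaluation map is a submersion on the universal moduli space $\mathcal{M}_1^{\ast}\left(\Sigma,M;\beta\right)$ of simple curves.
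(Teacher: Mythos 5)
Your overall architecture matches the paper's: pass to the cokernel equation, use the abundance of $B$ and somewhere-injectivity to force $\operatorname{supp}\eta\subset\{z_{0}\}$, invoke the structure theorem to write $\eta$ as a combination of $\partial^{\alpha}\delta_{z_{0}}$ with $|\alpha|\le k-1$, test against cut-off polynomials near $z_{0}$, and fall back on induction on $k$ when $\zeta_{k}=0$. But the crux (your third step) has a genuine gap, and it is exactly the point the paper's proof is designed to handle. Your decoupling claim --- that the $\eta$-pairing sees only the coefficients $a_{p,q}$ with $q\ge 1$ while the $\zeta$-pairing sees only the $a_{l,0}$ --- is true only for the principal parts. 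The $l$-th component of $D j_{hol}^{k}$ applied to $h$ is $(D_{u}\partial_{j,J})^{l}h(z_{0})$ with $D_{u}\partial_{j,J}h=\partial h+G(z)\overline{\partial}h+H(z)h$, and although $G,H$ vanish at $z_{0}$, their derivatives do not; expanding the $l$-fold composition therefore pairs $\zeta_{l}$ against precisely the mixed coefficients $a_{p,q}$, $q\ge 1$, that you intend to vary freely. So prescribing $a_{l,0}=0$ does \emph{not} kill the $\zeta$-contribution: the equations you obtain by varying the $a_{p,q}$ mix the $c_{\alpha}$ with the $\zeta_{l}$, and the conclusion ``each $c_{\alpha}=0$, hence $\eta=0$, and only then $\zeta=0$'' does not follow in that order. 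Any correct bookkeeping must treat $\eta$ and $\zeta$ simultaneously; indeed, if you filter by total jet order the first nontrivial statement you can extract is that the coefficient of $\partial_{z}^{k}\xi(z_{0})$ is $\zeta_{k}$ alone (the $\eta$-pairing cannot reach that coordinate because $\deg P\le k-1$ and $A(z_{0})=0$), so $\zeta_{k}$ is forced to vanish \emph{first} --- the opposite of your order of quantifiers.

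The paper resolves this not by decoupling but by cancellation: given any $\xi$, it adds the single holomorphic monomial $h=\frac{\zeta_{k}}{|\zeta_{k}|^{2}}\frac{1}{k!}(z-z_{0})^{k}w$, which vanishes to order exactly $k$ at $z_{0}$; holomorphy kills all $\overline{\partial}$-terms, the order-$k$ vanishing kills the $\eta$-pairing (order $\le k-1$) and all jet components below the top, and the surviving term $\zeta_{k}\,\partial_{z}^{k}h(z_{0})$ cancels the $\zeta$-pairing of $\xi$ --- provided $\zeta_{k}\neq 0$. That is why the $\zeta_{k}=0$ obstruction and the induction on $k$ arise. In your write-up this obstruction is invoked in the last paragraph but is incoherent with your own mechanism: if setting $a_{l,0}=0$ really killed the $\zeta$-pairing, nothing in your construction would break when $\zeta_{k}=0$, which signals that the needed construction is missing. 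Your step is repairable (either by the paper's explicit $h$ plus the case split, or by a careful downward induction on jet order that extracts $\zeta_{k}=0$, then the top-order $c_{\alpha}$, and so on), but as written the ``corrections can be absorbed'' assertion is precisely the unproved content of the theorem. Two smaller points: the paper pins the order bound $|\alpha|\le k-1$ simply from $\eta\in(W^{k,p})^{\ast}$ (working first at the minimal regularity $N=k$ and bootstrapping at the end), which is cleaner than your injectivity argument for $(D_{u}\overline{\partial})^{\ast}$ on point-supported distributions and avoids describing $\eta$ as an $L^{q}$ form, which would be inconsistent with point support; and the paper concludes $\eta=0$ by elliptic regularity for the weak equation $(D_{u}\overline{\partial}_{j,J})^{\ast}\eta=0$ before deducing $\zeta=0$, rather than after.
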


To prove theorem we need to verify that at each $\left( \left( u,j\right)
,z,J\right) \in $ $\mathcal{M}_{1}^{\ast }\left( \Sigma ,M;\beta \right) $ ,
the system of equations%
\begin{eqnarray}
D_{J,\left( j,u\right) }\overline{\partial }\left( B,\left( b,\xi \right)
\right) &=&\gamma  \label{Lin-dbar} \\
D_{J,\left( j,u\right) }j_{hol}^{k}\left( B,\left( b,\xi \right) \right)
\left( z\right) +\nabla _{v}\left( j_{hol}^{k}\left( u\right) \right) \left(
z\right) &=&\alpha  \label{Lin-j}
\end{eqnarray}%
has a solution $\left( B,\left( b,\xi \right) ,v\right) \in T_{J}\mathcal{J}%
_{\omega }\times T_{j}\mathcal{M}\left( \Sigma \right) \times T_{u}\mathcal{F%
}_{1}\left( \Sigma ,M;\beta \right) \times T_{z}\Sigma $ \ for each given
data 
\begin{equation*}
\gamma \in \Omega _{N-1,p}^{\left( 0,1\right) }\left( u^{\ast }TM\right) ,%
\text{ \ \ }\zeta =\left( \text{\ }\zeta _{0,}\text{\ }\zeta _{1,}\ldots 
\text{\ }\zeta _{k}\right) \in J_{hol}^{k}\left( T_{z}\Sigma ,T_{u\left(
z\right) }M\right) .
\end{equation*}%
It will be enough to consider the triple with $b=0$ and $v=0$ which we will
assume from now on.

We compute the $D_{J,\left( j,u\right) }j_{hol}^{k}\left( B,\left( b,\xi
\right) \right) \left( z\right) $. \ It is enough to compute $D_{J,\left(
j,u\right) }\sigma ^{l}\left( B,\left( 0,\xi \right) \right) \left( z\right) 
$ for $l=0,1,\cdots k$. We have 
\begin{equation}
D_{J,\left( j,u\right) }\sigma ^{l}\left( B,\left( 0,\xi \right) \right)
\left( z\right) =\pi _{hol}\left( \left( \nabla _{du}\right) ^{l}\xi \left(
z\right) \right) +\Sigma _{0\leq s,t\leq l}B\left( z\right) \cdot
F_{st}\left( z\right) \left( \left( \nabla _{du}\right) ^{s}\xi \left(
z\right) ,\nabla ^{t}u\left( z\right) \right)  \label{d-jet-local}
\end{equation}%
where $F_{st}\left( z\right) \left( \cdot ,\cdot \right) $ is some\
vector-valued monomial, and $B\left( z\right) $ is a matrix valued function,
both smoothly depending on $z$. There is no derivative of $B$ in the above
formula, because for any $l$, $\sigma ^{l}$ is the projection of the tensor $%
d^{l}u\in Sym^{l}\left( T_{z}\Sigma ,T_{u\left( x\right) }M\right) $ to $%
Sym_{j,J}^{\left( l,0\right) }\left( T_{z}\Sigma ,T_{u\left( x\right)
}M\right) $, and the projection only involves $J$ but not its derivatives.
Since $u$ is $\left( j\text{,}J\right) $-holomorphic, it also follows that 
\begin{equation}
\pi _{hol}\left( \left( \nabla _{du}\right) ^{l}\xi \left( z\right) \right)
=\left( \nabla _{du}^{^{\prime }}\right) ^{l}\xi \left( z\right) ,
\label{power}
\end{equation}%
where $\nabla _{du}^{^{\prime }}=\pi _{hol}\nabla _{du}=D_{u}\partial _{j,J}$%
. There is a formula for $D_{u}\partial _{j,J}$ and $D_{u}\overline{\partial 
}_{j,J}$ nearby $z_{0}$ (see \cite{Si}): 
\begin{eqnarray}
D_{u}\overline{\partial }_{j,J}\xi &=&\overline{\partial }\xi +A\left(
z\right) \partial \xi +C\left( z\right) \xi  \label{Sikarov} \\
D_{u}\partial _{j,J}\xi &=&\partial \xi +G\left( z\right) \overline{\partial 
}\xi +H\left( z\right) \xi  \notag
\end{eqnarray}%
where $A\left( z\right) ,C\left( z\right) ,G\left( z\right) ,H\left(
z\right) $ are matrix-valued smooth functions, all vanishing at $z_{0}$.

Now we study the solvability of $\left( \ref{Lin-dbar}\right) $ and $\left( %
\ref{Lin-j}\right) $ by Fredholm alternative. We regard 
\begin{equation*}
\Omega _{N-1,p}^{\left( 0,1\right) }\left( u^{\ast }TM\right) \times
J_{hol}^{k}\left( T_{z}\Sigma ,T_{u\left( z\right) }M\right)
\end{equation*}%
as a Banach space with the norm 
\begin{equation*}
\left\Vert \cdot \right\Vert _{N-1,p}+\Sigma _{l=1}^{k}\left\vert \cdot
\right\vert _{l}
\end{equation*}%
where $\left\vert \cdot \right\vert _{l}$ is any norm induced by an inner
product on the $2n$-dimensional vector space $Sym_{j,J}^{\left( l,0\right)
}\left( T_{z}\Sigma ,T_{u\left( z\right) }M\right) \simeq \mathbb{C}^{n}$.

We denote the natural pairing 
\begin{equation*}
\Omega _{N-1,p}^{\left( 0,1\right) }\left( u^{\ast }TM\right) \times \left(
\Omega _{N-1,p}^{\left( 0,1\right) }\left( u^{\ast }TM\right) \right) ^{\ast
}\rightarrow \mathbb{R}
\end{equation*}%
by $\left\langle \cdot ,\cdot \right\rangle $ and the inner product on $%
Sym_{j,J}^{\left( l,0\right) }\left( T_{z}\Sigma ,T_{u\left( z\right)
}M\right) $ by $\left( \cdot ,\cdot \right) _{z}$.

Let $\left( \eta ,\zeta \right) \in \left( \Omega _{N-1,p}^{\left(
0,1\right) }\left( u^{\ast }TM\right) \right) ^{\ast }\times
J_{hol}^{k}\left( T_{z}\Sigma ,T_{u\left( z\right) }M\right) $ for $\zeta
=\left( \zeta _{1,\cdots ,}\zeta _{k}\right) $ such that 
\begin{equation}
\left\langle D_{J,\left( j,u\right) }\overline{\partial }\left( B,\left(
0,\xi \right) \right) ,\eta \right\rangle +\Sigma _{l=1}^{k}\left(
D_{J,\left( j,u\right) }\sigma ^{l}\left( B,\left( 0,\xi \right) \right)
\left( z\right) ,\zeta _{l}\right) _{z}=0  \label{coker-equ}
\end{equation}%
for all $\xi \in \Omega _{N-1,p}^{\left( 0,1\right) }\left( u^{\ast
}TM\right) $ and $B\in T_{J}\mathcal{J}_{\omega }$. We want to show $\left(
\eta ,\zeta \right) =\left( 0,0\right) $. The idea is to change the above
equation into%
\begin{equation*}
\left\langle D_{J,\left( j,u\right) }\overline{\partial }\left( B,\left(
0,\xi \right) \right) ,\eta \right\rangle =0
\end{equation*}%
for all $\xi $ and $B$ by judiciously modifying $\xi $ by a Taylor
polynomial nearby $z$, and then use standard techniques in $J$-holomorphic
curve theory to show $\eta =0$, and after that use Cauchy integral to show $%
\zeta =0$. We first deal with $N=k$ case, and later raise the regularity by
ellipticity of Cauchy-Riemann equation.

\ \ Let $\xi =0$, then $\left( \ref{coker-equ}\right) $ becomes%
\begin{equation*}
\left\langle \frac{1}{2}B\circ du\circ j,\eta \right\rangle =0.
\end{equation*}%
Using the abundance of $B\in T_{J}\mathcal{J}_{\omega }$, and that $u$ is a
simple $J$-holomorphic curve, by standard technique (for example \cite{MS})
we get $\eta =0$ on $\Sigma \backslash \left\{ z_{0}\right\} $, namely supp$%
\eta \subset \left\{ z_{0}\right\} $. Since $\eta \in \left( W^{k,p}\right)
^{\ast }$, by the structure theorem of distribution with point support (see 
\cite{GS}), we have%
\begin{equation}
\eta =P\left( \frac{\partial }{\partial z},\frac{\partial }{\partial 
\overline{z}}\right) \delta _{z_{0}}  \label{point-distribution}
\end{equation}%
where $\delta _{z_{0}}$ is the delta function supported at $z_{0}$, and $P$
is a polynomial in two variables with degree $\leq k-1$: this is because the
evaluation at a point of the $k$-th derivative of $W^{k,p}$ maps does not
define a continuous functional on $W^{k,p}$.

\ \ Let $B=0$. By $\left( \ref{d-jet-local}\right) $ and $\left( \ref%
{point-distribution}\right) $, $\left( \ref{coker-equ}\right) $ becomes%
\begin{equation}
\left\langle D_{u}\overline{\partial }_{j,J}\xi ,\eta \right\rangle +\left(
\left( D_{J,\left( j,u\right) }j_{hol}^{k}\right) \xi \left( z_{0}\right)
,\zeta \right) _{z_{0}}=0.  \label{coker-equ-0}
\end{equation}%
Since $\xi $ is arbitrary, we can replace $\xi $ by $\xi +\chi \left(
z\right) h\left( z,\overline{z}\right) $ in the above identity, where $%
h\left( z,\overline{z}\right) $ is a vector-valued polynomial in $z$ and $%
\overline{z}$, and $\chi \left( z\right) $ is a smooth cut-off function
equal to $1$ in a coordinate neighborhood of $z_{0}$ and $0$ outside a
slightly larger neighborhood, so that $\chi \left( z\right) h\left( z,%
\overline{z}\right) $ is a well defined and smooth on whole $\Sigma $. We
want $\left( \ref{coker-equ-0}\right) $ becomes $\left\langle D_{u}\overline{%
\partial }_{j,J}\xi ,\eta \right\rangle =0$ after that replacement. For this
purpose the $h\left( z,\overline{z}\right) $ should satisfy 
\begin{equation}
\left\langle D_{u}\overline{\partial }_{j,J}h,P\left( \frac{\partial }{%
\partial z},\frac{\partial }{\partial \overline{z}}\right) \delta
_{z_{0}}\right\rangle +\left( \left( D_{J,\left( j,u\right)
}j_{hol}^{k}\right) h\left( z_{0}\right) ,\zeta \right) _{z_{0}}=-\left(
\left( D_{J,\left( j,u\right) }j_{hol}^{k}\right) \xi \left( z_{0,}\overline{%
z}_{0}\right) ,\zeta \right) _{z_{0}}  \label{modify-terms}
\end{equation}%
After simplification, the above is a differential equation about $h$:%
\begin{equation}
Q\left( \frac{\partial }{\partial z},\frac{\partial }{\partial \overline{z}}%
\right) h\left( z_{0,}\overline{z}_{0}\right) =w  \label{modify-equ}
\end{equation}%
where $Q\left( s,t\right) $ is a vector-valued polynomial in two variables $%
s,t$\thinspace , $Q\left( \frac{\partial }{\partial z},\frac{\partial }{%
\partial \overline{z}}\right) $ acts on $h\left( z,\overline{z}\right) $
with vector coefficients paired with those of $h$ by inner product, and $%
w:=-\left( \left( D_{J,\left( j,u\right) }j_{hol}^{k}\right) \xi \left(
z_{0}\right) ,\zeta \right) _{z_{0}}$ is a constant.

\emph{Here comes the crucial observation}: when $\zeta _{k}\neq 0,$ the
highest degree of $s$ in $Q\left( s,t\right) $ is in the term $\zeta
_{k}s^{k}$. This is because $P\left( s,t\right) $ has degree$\leq k-1$ and
after integration by parts, $\frac{\partial }{\partial z}$ can fall at $D_{u}%
\overline{\partial }_{j,J}h$ of most $\left( k-1\right) $ times, and in $%
\left( \ref{Sikarov}\right) $ 
\begin{equation*}
D_{u}\overline{\partial }_{j,J}h=\overline{\partial }h+A\left( z\right)
\partial h+C\left( z\right) h,
\end{equation*}%
where $A\left( z_{0}\right) =0$. On the other hand, in $\left( D_{J,\left(
j,u\right) }j_{hol}^{k}\right) h$, by $\left( \ref{d-jet-local}\right)
,\left( \ref{power}\right) ,\left( \ref{Sikarov}\right) $, the highest
derivative for $\frac{\partial }{\partial z}$ is $\left( \frac{\partial }{%
\partial z}\right) ^{k}$, and is paired with the coefficient $\zeta _{k}$ in 
$\left( \ref{modify-terms}\right) $.

When \ $\zeta _{k}\neq 0,$ we take $h\left( z,\overline{z}\right) =\frac{%
\zeta _{k}}{\left\vert \zeta _{k}\right\vert ^{2}}\frac{1}{k!}$ $\left(
z-z_{0}\right) ^{k}w$, then $h$ solves $\left( \ref{modify-equ}\right) $.
This is because of the following: $h$ is holomorphic nearby $z_{0}$, so we
can ignore all terms in $Q\left( \frac{\partial }{\partial z},\frac{\partial 
}{\partial \overline{z}}\right) $ involving $\frac{\partial }{\partial 
\overline{z}}$; For the remaining terms in $Q\left( \frac{\partial }{%
\partial z},\frac{\partial }{\partial \overline{z}}\right) $, they must be
of the form $\left( \frac{\partial }{\partial z}\right) ^{l}$ with $0\leq
l\leq k$, and only $\left( \frac{\partial }{\partial z}\right) ^{k}h\left(
z_{0}\right) \neq 0$. \bigskip 

With this $h$, we reduce the cokernal equation to $\left\langle D_{u}%
\overline{\partial }_{j,J}\xi ,\eta \right\rangle =0$. Since $\eta $ is a
weak solution of $\left( D_{u}\overline{\partial }_{j,J}\right) ^{\ast }\eta
=0$ on $\Sigma $, by ellipticity of the $\left( D_{u}\overline{\partial }%
_{j,J}\right) ^{\ast }$ operator, the distribution solution $\eta $ is
smooth on $\Sigma $ (See \cite{Ho}). Since $\eta =0$ on $\Sigma \backslash
\left\{ z_{0}\right\} $ , $\eta =0$ on $\Sigma $. Then it is not hard to
conclude $\zeta =0$ by Cauchy integral formula as in \cite{OZ} and \cite{Oh}%
. Therefore the system of equations $\left( \ref{Lin-dbar}\right) $ and $%
\left( \ref{Lin-j}\right) $ is solvable for any $\eta \in W^{k,p}$ and $%
\alpha \in J_{hol}^{k}\left( T_{z_{0}}\Sigma ,T_{u\left( z_{0}\right)
}M\right) $.

There is one case left: that is when $\zeta _{k}=0$. We still need to show $%
\left( \eta ,\zeta \right) =\left( 0,0\right) $. If $k=1$, then $\zeta
_{1}=0\Leftrightarrow \zeta =0$ so it has been done as above. If $k>1$, we
notice that the cokernal equation $\left( \ref{coker-equ}\right) $ now is
the cokernal equation for the section $D\Upsilon _{k-1}$, since the $k$-th
jet is paired with $\zeta _{k}$ there, 
\begin{equation*}
\left( \left( D_{J,\left( j,u\right) }j_{hol}^{k}\right) \xi \left(
z_{0}\right) ,\zeta \right) _{z_{0}}=\left( \left( D_{J,\left( j,u\right)
}j_{hol}^{k-1}\right) \xi \left( z_{0}\right) ,\zeta \right) _{z_{0}}.
\end{equation*}
By induction assumption on $k$, $D\Upsilon _{k-1}$ has trivial cokernal
hence $\left( \eta ,\zeta \right) =\left( 0,0\right) $.

Last we raise the regularity from $W^{k+1,p}$ to $W^{N,p}$, for any $N>k$.
For $\eta \in W^{N-1,p}\subset W^{k,p}$, by the above argument we can find a
solution $\xi \in W^{k+1,p}$ in $\left( \ref{Lin-dbar}\right) $. By elliptic
regularity, the solution $\xi \in W^{N,p}$. Therefore $\left( \ref{Lin-dbar}%
\right) $ and $\left( \ref{Lin-j}\right) $ is solvable in $W^{N,p}$ setting.
This finishes induction hence the proof of Theorem.

\begin{remark}
In the above proof, the induction starts from $k=1$. In \cite{OZ}, $k=1$
case was treated in the framework of $1$-jet transversality at $\left(
u,z_{0}\right) $ where $du\left( z_{0}\right) =0$. The above proof includes
the $k=1$ case as well, but the way of choosing $h$ does not rely on $%
du\left( z_{0}\right) =0$ and applies to any $z_{0}$ on $\Sigma $.
\end{remark}

\begin{remark}
\label{fail-usual-jet}It is crucial that we use the holomorphic $k$-jet
bundle instead of the usual $k$-jet bundle to get the sujective property of $%
D\Upsilon _{k}$. Otherwise, as the usual jet evaluation involves mixed
derivatives, given $\zeta _{\left( k,0\right) }=0$ we can not reduce the
cokernal equation to the $\left( k-1\right) $ case by induction, and when $%
k=1$, $\zeta _{\left( 1,0\right) }=0$ does not imply $\zeta =0$. In the case 
$k=1,$ we can explicitly see why this submersion property fails in the usual 
$1$-jet bundle: for a $J$-holomorphic curve $u$ with $du\left( z_{0}\right)
=0$, and $\Gamma _{1}=\left( \overline{\partial },j^{k}\right) :\mathcal{F}%
_{1}\left( \Sigma ,M;\beta \right) \times \mathcal{J}_{\omega }\rightarrow 
\mathcal{H}^{^{\prime \prime }}\times J^{1}\left( \Sigma ,M\right) $,
calculations in \cite{OZ} yield 
\begin{equation*}
D\Gamma _{1}\left( \xi ,B\right) =\left( D_{u}\overline{\partial }_{j,J}\xi ;%
\text{ }D_{u}\overline{\partial }_{j,J}\xi \left( z_{0}\right)
,D_{u}\partial _{j,J}\xi \left( z_{0}\right) \right)
\end{equation*}%
therefore there is no solution for $\left( \eta ,\alpha _{\left( 0,1\right)
,}\alpha _{\left( 1,0\right) }\right) $ if $\eta \left( z_{0}\right) \neq
\alpha _{\left( 0,1\right) }$.
\end{remark}

However, if $du\left( z_{0}\right) \neq 0$ then the sujective property still
holds in the usual jet bundles. More precisely we have the following

\begin{theorem}
\label{usual-jet}At any $J$-holomrophic curve $\left( \left( u,j\right)
,z_{0},J\right) \in $ $\mathcal{M}_{1}^{\ast }\left( \Sigma ,M;\beta \right)
\subset \mathcal{F}_{1}\left( \Sigma ,M;\beta \right) \times \mathcal{J}%
_{\omega }$ with $du\left( z_{0}\right) \neq 0$, the linearization $D\Gamma
_{k}$ of the section 
\begin{equation*}
\Gamma _{k}=\left( \overline{\partial },\text{ }j^{k}\right) :\mathcal{F}%
_{1}\left( \Sigma ,M;\beta \right) \times \mathcal{J}_{\omega }\rightarrow 
\mathcal{H}^{^{\prime \prime }}\times J^{k}\left( \Sigma ,M\right)
\end{equation*}%
is a surjective. Especially the linearization $Dj^{k}$ of $k$-jet evaluation 
\begin{equation*}
j^{k}:\mathcal{F}_{1}\left( \Sigma ,M;\beta \right) \times \mathcal{J}%
_{\omega }\rightarrow J^{k}\left( \Sigma ,M\right)
\end{equation*}%
at $\left( \left( u,j\right) ,z_{0},J\right) $\ is surjective.
\end{theorem}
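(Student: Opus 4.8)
The plan is to run the Fredholm-alternative reduction already set up for Theorem~\ref{Submersion}: surjectivity of $D\Gamma _{k}$ at $\left( \left( u,j\right) ,z_{0},J\right) $ is equivalent to showing that every pair $\left( \eta ,\zeta \right) $ — with $\eta \in \left( \Omega ^{\left( 0,1\right) }\left( u^{\ast }TM\right) \right) ^{\ast }$ and $\zeta =\left( \zeta _{1},\dots ,\zeta _{k}\right) $ — satisfying the analogue of the cokernel equation $\left( \ref{coker-equ}\right) $, in which the components $\sigma ^{l}$ and the inner products are now those of the \emph{full} jet bundle $\dprod_{l}Sym^{l}\left( T_{z_{0}}\Sigma ,T_{u\left( z_{0}\right) }M\right) $ rather than of its holomorphic part, must vanish. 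The feature I want to exploit is that the hypothesis $du\left( z_{0}\right) \neq 0$ disposes of $\eta $ outright, so that no Taylor-polynomial modification of $\xi $ near $z_{0}$, of the kind used in Theorem~\ref{Submersion}, is needed.

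First I would set $\xi =0$. By the local formula $\left( \ref{d-jet-local}\right) $ (which holds verbatim for the full jet, with $\pi _{hol}$ replaced by the identity), every summand of $D_{J,\left( j,u\right) }\sigma ^{l}\left( B,\left( 0,\xi \right) \right) \left( z_{0}\right) $ carries a factor of $\xi $ or one of its covariant derivatives and hence vanishes, so the cokernel equation reduces to $\left\langle \frac{1}{2}B\circ du\circ j,\eta \right\rangle =0$ for all $B\in T_{J}\mathcal{J}_{\omega }$. Exactly as in the proof of Theorem~\ref{Submersion}, since $u$ is simple this first yields $\mathrm{supp}\,\eta \subset \left\{ z_{0}\right\} $ — one kills $\eta $ away from $z_{0}$ using $B$ localized near injective points of $u$, combines with $\left\langle D_{u}\overline{\partial }_{j,J}\xi ,\eta \right\rangle =0$ for $\xi $ supported in $\Sigma \backslash \left\{ z_{0}\right\} $ (the jet terms again drop) and elliptic regularity — so $\eta =P\left( \frac{\partial }{\partial z},\frac{\partial }{\partial \overline{z}}\right) \delta _{z_{0}}$ for some polynomial $P$. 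Now comes the use of $du\left( z_{0}\right) \neq 0$: since $u$ is an immersion near $z_{0}$, the jet of $B$ along $u$ at $z_{0}$ is completely free, and since $du\left( z_{0}\right) \neq 0$ the $\left( 0,1\right) $-forms $\frac{1}{2}B\circ du\circ j$ realize every jet at $z_{0}$ (the pointwise surjectivity of $B\mapsto \frac{1}{2}B\circ du\circ j$ onto $\left( 0,1\right) $-values being the standard linear-algebra lemma of \cite{MS}); hence $\left\langle \frac{1}{2}B\circ du\circ j,\eta \right\rangle =0$ for all $B$ forces $P=0$, that is $\eta =0$.

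With $\eta =0$ the cokernel equation becomes $\sum_{l=1}^{k}\left( \left( \nabla _{du}\right) ^{l}\xi \left( z_{0}\right) ,\zeta _{l}\right) _{z_{0}}=0$ for all $\xi $, since $D_{J,\left( j,u\right) }\sigma ^{l}\left( 0,\left( 0,\xi \right) \right) \left( z_{0}\right) $ is the full $l$-th covariant derivative $\left( \nabla _{du}\right) ^{l}\xi \left( z_{0}\right) $. Because $\xi \mapsto \left( \left( \nabla _{du}\right) ^{l}\xi \left( z_{0}\right) \right) _{l=1}^{k}$ is onto $\dprod_{l=1}^{k}Sym^{l}\left( T_{z_{0}}\Sigma ,T_{u\left( z_{0}\right) }M\right) $ — it differs by an invertible transformation, triangular in the jet order, from the evaluation of the $k$-jet of $\xi $ at $z_{0}$, which is plainly onto — we conclude $\zeta =0$. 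Thus $\left( \eta ,\zeta \right) =\left( 0,0\right) $, $D\Gamma _{k}$ is surjective, and in particular so is $Dj^{k}$; the Sobolev regularity is raised from $W^{k+2,p}$ to $W^{N,p}$ by the same elliptic bootstrap as at the end of the proof of Theorem~\ref{Submersion}.

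The step I expect to be the crux, and the only place where $du\left( z_{0}\right) \neq 0$ is genuinely used, is the passage from $\mathrm{supp}\,\eta \subset \left\{ z_{0}\right\} $ (all one gets in general) to $\eta =0$, via the surjectivity of $B\mapsto \frac{1}{2}B\circ du\circ j$ onto jets at $z_{0}$. This is precisely where the argument breaks for $du\left( z_{0}\right) =0$: there the image of this map vanishes at $z_{0}$, a nonzero point distribution $\eta $ at $z_{0}$ survives, and (as Remark~\ref{fail-usual-jet} shows for $k=1$) $D\Gamma _{k}$ really does fail to be surjective in the usual jet bundle. Everything else — the vanishing of the jet terms under $\xi =0$ or $\xi $ supported off $z_{0}$, the elliptic regularity on $\Sigma \backslash \left\{ z_{0}\right\} $, the triangularity of the jet evaluation in $\xi $, and the Sobolev bookkeeping — is routine; alternatively one could split $j^{k}$ into its holomorphic part, covered by Theorem~\ref{Submersion}, and its mixed part, proving only the latter by the argument above.
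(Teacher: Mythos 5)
Your proposal is correct, but it runs the elimination of the cokernel pair in the opposite order from the paper and uses the hypothesis $du\left( z_{0}\right) \neq 0$ through a different mechanism. The paper first kills $\zeta $: it chooses a smooth $\xi $ supported in a small neighborhood of $z_{0}$ whose jet realizes $\zeta $, and then (this is where $du\left( z_{0}\right) \neq 0$ enters) uses the pointwise linear-algebra lemma of \cite{MS} to build $B\in T_{J}\mathcal{J}_{\omega }$ with $D_{u}\overline{\partial }_{j,J}\xi +\frac{1}{2}B\circ du\circ j=0$, so the $\eta $-pairing drops out identically and the cokernel equation gives $\left\vert \zeta \right\vert ^{2}=0$ without ever analyzing $\eta $ near $z_{0}$; afterwards, setting $B=0$, it gets $\left\langle D_{u}\overline{\partial }_{j,J}\xi ,\eta \right\rangle =0$ for all $\xi $ and concludes $\eta =0$ from elliptic regularity plus $\mathrm{supp}\,\eta \subset \left\{ z_{0}\right\} $; no structure theorem for point-supported distributions is needed in this theorem. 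You instead kill $\eta $ first, invoking the representation \eqref{point-distribution} and the surjectivity of $B\mapsto $ (finite jet of $\frac{1}{2}B\circ du\circ j$ at $z_{0}$), which requires the jet-level upgrade of the pointwise lemma (free prescription of the derivatives of $B$ along the locally embedded image of $u$ near $z_{0}$, plus a triangularity argument); then, with $\eta =0$, you kill $\zeta $ by the surjectivity of $\xi \mapsto \left( Dj^{k}\right) \xi \left( z_{0}\right) $, a step that no longer uses $du\left( z_{0}\right) \neq 0$ at all. Both routes are sound. The paper's is shorter and needs only the pointwise lemma and standard elliptic regularity; yours costs a bit more bookkeeping (the distribution structure theorem from \cite{GS} and the jet version of the lemma) but makes the failure mode at critical points completely transparent: when $du\left( z_{0}\right) =0$ the image of $B\mapsto \frac{1}{2}B\circ du\circ j$ degenerates at $z_{0}$, the point-supported $\eta $ survives, and one recovers exactly the obstruction exhibited in Remark \ref{fail-usual-jet}. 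Your closing alternative (treating only the mixed part of $j^{k}$ and quoting Theorem \ref{Submersion} for the holomorphic part) would also work but is not what the paper does.
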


\begin{proof}
It is enough to show that the cokernal equation 
\begin{equation*}
\left\langle D_{u}\overline{\partial }_{j,J}\xi +\frac{1}{2}B\circ du\circ
j,\eta \right\rangle +\left( \left( D_{J,\left( j,u\right) }j^{k}\right) \xi
\left( z_{0}\right) ,\zeta \right) _{z_{0}}=0\text{, \ \ for all }\xi ,B%
\text{ \ \ \ \ \ \ \ }
\end{equation*}
only has trivial solution $\left( \eta ,\zeta \right) =\left( 0,0\right) $.
To do this, using standard argument in \cite{MS} we again get supp$\eta
\subset \left\{ z_{0}\right\} $. Given $\zeta \in J^{k}\left(
T_{z_{0}}\Sigma ,T_{u\left( z_{0}\right) }M\right) $, by Taylor polynomial
we can construct a smooth $\xi $ supported in arbitrarily small neighborhood
of $z_{0}\in \Sigma $, such that $\left( D_{J,\left( j,u\right)
}j^{k}\right) \xi \left( z_{0}\right) =\zeta $. When $du\left( z_{0}\right)
\neq 0$, by linear algebra (namely the abundance of $T_{J}\mathcal{J}%
_{\omega }$) and perturbation method we can construct $B\in T_{J}\mathcal{J}%
_{\omega }$ such that $D_{u}\overline{\partial }_{j,J}\xi +\frac{1}{2}B\circ
du\circ j=0$ on $\Sigma $ (see \cite{MS}). So we get from the cokernal
equation that $0+\left\vert \zeta \right\vert ^{2}=0$, i.e. $\zeta =0$. Let $%
B=0$ in the cokernal equation, we get $\left\langle D_{u}\overline{\partial }%
_{j,J}\xi ,\eta \right\rangle =0$ for all $\xi $. Then by elliptic
regularity we conclude $\eta =0$ on the whole $\Sigma $.
\end{proof}

The following theorem is a direct consequence of Theorem \ref{Submersion} by
applying Sard-Smale theorem.

\begin{theorem}
\label{S}Let $S$ be any smooth section of the holomorphic $k$-jet bundle $%
J_{hol}^{k}\left( \Sigma ,M\right) \rightarrow \mathcal{F}_{1}\left( \Sigma
,M;\beta \right) \times \mathcal{J}_{\omega }$. Then the section $\Upsilon
_{k}$ is transversal to the section $\left( 0,S\right) $. The moduli space 
\begin{equation*}
\mathcal{M}^{S}:=\left( j_{hol}^{k}\right) ^{-1}\left( S\right) \cap 
\mathcal{M}_{1}^{\ast }\left( \Sigma ,M;\beta \right) =\Upsilon
_{k}^{-1}\left( 0,S\right)
\end{equation*}%
is a Banach submanifold of codimension $2kn$ in $\mathcal{M}_{1}^{\ast
}\left( \Sigma ,M;\beta \right) $. Under the natural projection $\pi :%
\mathcal{F}_{1}\left( \Sigma ,M;\beta \right) \times \mathcal{J}_{\omega
}\rightarrow \mathcal{J}_{\omega }$, $\ $there exists $\mathcal{J}_{reg}$ $%
\subset \mathcal{J}_{\omega }$ of second category, such that for any $J\in 
\mathcal{J}_{reg}$, the modulis space $\mathcal{M}_{J}^{S}:=\mathcal{M}%
^{S}\cap \pi ^{-1}\left( J\right) $ is a smooth manifold in $\mathcal{M}%
_{1}^{\ast }\left( \Sigma ,M,J;\beta \right) ,$ with dimension 
\begin{equation*}
\dim \mathcal{M}_{J}^{S}=\dim \mathcal{M}_{1}^{\ast }\left( \Sigma
,M,J;\beta \right) -2kn,
\end{equation*}%
and all the elements in $\mathcal{M}_{J}^{S}$ are Fredholm regular.

\begin{remark}
In \cite{Oh}, the $S$ is the zero section of the holomorphic $k$-jet bundle $%
J_{hol}^{k}\left( \Sigma ,M\right) $, so $\mathcal{M}_{J}^{S}$ is the set of 
$J$-holomorphic curves with prescribed ramification degrees at the marked
points. The $J$-holomorphic curves in our moduli space $\mathcal{M}_{J}^{S}$
can obey more general constraint $S$. Similar to \cite{Oh}, the theorem also
has the version with more than one marked point. Also the constaint $S$ need
not to be a full section over the base, but only a closed submanifold in $%
J_{hol}^{k}\left( \Sigma ,M\right) $ whose tangent space projects onto the
horizontal distribution of the bundle $J_{hol}^{k}\left( \Sigma ,M\right)
\rightarrow $ $\mathcal{F}_{1}\left( \Sigma ,M;\beta \right) \times \mathcal{%
J}_{\omega }$, because the essential part in the proof the theorem is that $%
D\Upsilon _{k}|_{\left( 0,S\right) }$ is surjective.
\end{remark}
\end{theorem}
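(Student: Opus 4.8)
The plan is to derive this as a routine application of the Sard--Smale theorem once Theorem \ref{Submersion} is in hand. First I would verify that $\Upsilon_k$ is a Fredholm section: the first component $\overline{\partial}$ has linearization $D_u\overline{\partial}_{j,J}$ which is Fredholm of index $\mathrm{ind}\,\overline{\partial} = n\chi(\Sigma) + 2\langle c_1(TM),\beta\rangle$ plus the $\dim\mathcal{M}(\Sigma)$ contribution from varying $j$, and the second component $j_{hol}^k$ lands in the finite-rank bundle $J_{hol}^k(\Sigma,M)$ of fiber dimension $2kn$ (summing $\dim_{\mathbb{R}} H^{(l,0)}_{j,J} = 2n$ over $l=1,\dots,k$; the $l=0$ component is the evaluation, which is already accounted for in the base). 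Hence $\Upsilon_k$ restricted to the $J$-fixed slice is Fredholm, and on the universal space $\mathcal{M}_1^\ast(\Sigma,M;\beta)$ it remains Fredholm modulo the (Banach, infinite-dimensional) $\mathcal{J}_\omega$ direction.

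Next, given a smooth section $S$, transversality of $\Upsilon_k$ to $(0,S)$ at a point of $\Upsilon_k^{-1}(0,S)$ means that $D\Upsilon_k$ composed with the projection onto the normal bundle of $(0,S)$ is surjective; since $(0,S)$ is a section, that normal bundle is canonically $\mathcal{H}''\times J_{hol}^k(\Sigma,M)$ and the composite is exactly $D\Upsilon_k - D(0,S)$, which is surjective precisely when $D\Upsilon_k$ is (the correction $D(0,S)$ is a bounded operator into the same space, so surjectivity is unaffected). This is where Theorem \ref{Submersion} enters directly. It follows from the Banach implicit function theorem that $\mathcal{M}^S = \Upsilon_k^{-1}(0,S)\cap\mathcal{M}_1^\ast(\Sigma,M;\beta)$ is a Banach submanifold, and comparing with $\mathcal{M}_1^\ast(\Sigma,M;\beta) = \overline{\partial}^{-1}(0)$ one reads off that its codimension there equals the fiber rank $2kn$ of $J_{hol}^k(\Sigma,M)$.

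Then I would run the Sard--Smale argument: the projection $\pi:\mathcal{M}^S\to\mathcal{J}_\omega$ is a Fredholm map (its linearization at a point differs from $D\overline{\partial}$-type operators by finite-rank corrections, so the index is that of the constrained problem, namely $\dim\mathcal{M}_1^\ast(\Sigma,M,J;\beta) - 2kn$), and Sard--Smale yields a second-category set $\mathcal{J}_{reg}\subset\mathcal{J}_\omega$ of regular values. For $J\in\mathcal{J}_{reg}$, $\mathcal{M}_J^S = \pi^{-1}(J)$ is a smooth manifold of dimension equal to the Fredholm index, i.e. $\dim\mathcal{M}_1^\ast(\Sigma,M,J;\beta) - 2kn$, and the surjectivity of the vertical differential at each point of $\mathcal{M}_J^S$ is exactly the statement that each element is Fredholm regular. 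The one genuine technical point — not really an obstacle but the thing to handle carefully — is the same as in \cite{Oh} and \cite{MS}: $\mathcal{J}_\omega$ is not a Banach manifold, so one works with the Floer $C^\varepsilon$-space of perturbations (as indicated in the Fredholm set-up section), proves the statement there, and then takes an intersection over a countable exhaustion to descend to $C^\infty$; this forces the $N=N(\beta,k)$ bookkeeping on the Sobolev completions so that all the maps involved are of class $C^1$ and the index count is independent of $N$. The final remark about $S$ being merely a closed submanifold whose tangent space surjects onto the horizontal distribution is handled identically, since the only input used is surjectivity of $D\Upsilon_k$ followed by projection, which still holds.
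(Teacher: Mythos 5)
Your proposal is correct and follows exactly the route the paper intends: the paper derives Theorem \ref{S} as a direct consequence of Theorem \ref{Submersion} via the implicit function theorem and the Sard--Smale theorem applied to the projection $\pi$, which is precisely your argument (you merely spell out the Fredholm index bookkeeping, the codimension count $2kn$, and the Floer $C^{\varepsilon}$ device that the paper delegates to its Fredholm set-up section). No discrepancies worth noting.
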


The theorem appears to be a good start of studying moduli spaces of $J$%
-holomorphic curves satisfying general jet constraints in the holomorphic
jet bundle; for example, moduli spaces of $J$-holomorphic curves with self
tangency. Also in \cite{CM}, jet constraints from symplectic hypersurfaces
were used to get rid of multicovering bubbling spheres. This enables them to
define genus zero Gromov-Witten invariants without abstract perturbations.

The above theorem tells that the moduli spaces $\mathcal{M}_{J}^{S}$ are
well-behaved, and the $\left\{ J_{t}\right\} _{0\leq t\leq 1}$ family
version of the above theorem tells that they are cobordant to each other by
moduli spaces $\left\{ \mathcal{M}_{J_{t}}^{S}\right\} _{0\leq t\leq 1}$ for
generic path $J_{t}\subset \mathcal{J}_{\omega }$. It is interesting to see
if the moduli spaces $\mathcal{M}_{J}^{S}$ can be used to construct new
symplectic invariants.

\end{document}